\newtheorem{theorem}{Theorem}
\newenvironment{proof}[1][Proof]{\noindent\textbf{#1.} }{\ \rule{0.5em}{0.5em}}
\begin{document}

\title{Neuberg cubics over finite fields}
\author{N.\ J. Wildberger\\School of Mathematics and Statistics\\UNSW Sydney 2052 Australia}
\maketitle
\date{}

\begin{abstract}
The framework of universal geometry allows us to consider metrical properties
of affine views of elliptic curves, even over finite fields. We show how the
Neuberg cubic of triangle geometry extends to the finite field situation and
provides interesting potential invariants for elliptic curves, focussing on an
explicit example over $\mathbb{F}_{23}$. We also prove that tangent conics for
a Weierstrass cubic are identical or disjoint.

\end{abstract}

\section*{Metrical views of cubics}

This paper looks at the connection between modern Euclidean triangle geometry
and the arithmetic of elliptic curves over finite fields using the framework
of universal geometry (see \cite{Wild}), a metrical view of algebraic geometry
based on the algebraic notions of \textit{quadrance }and \textit{spread}
rather than \textit{distance} and \textit{angle.} A good part of triangle
geometry appears to extend to finite fields. In particular, the Neuberg cubic
provides a rich organizational structure for many triangle centers and
associated lines through the group law and related Desmic (linking) structure,
even in a finite field. It and other triangle cubics have the potential to be
useful geometrical tools for understanding elliptic curves. See
\cite{Cundy-Parry 1}, \cite{Cundy-Parry 2}, \cite{Cundy-Parry 3}, \cite{EG},
\cite{Kimberling1} and \cite{Kimberling2} for background on triangle cubics.

For triangle geometers, finite fields hold potential applications to
cryptography and also provide a laboratory for exploration that in many ways
is more pleasant than the decimal numbers. A price to be paid, however, is
that the usual tri-linear coordinate framework needs to be replaced by
Cartesian or barycentric coordinates.

We begin with a brief review of the relevant notions from rational
trigonometry, which allows the set-up of metrical algebraic geometry. Then we
discuss the Neuberg cubic of a triangle and related centers, illustrated in a
particular example over $\mathbb{F}_{23}$. We also prove that for affine
cubics in Weierstrass form the tangent conics are all disjoint provided $-3$
is not a square in the field.

It should be noted that there is also a projective version of universal
geometry (see \cite{Wild2}), but here we stick to the affine situation.

\section*{Laws of Rational Trigonometry}

Fix a finite field $\mathbb{F}$ not of characteristic two, whose elements are
called \textbf{numbers}. A \textbf{point} $A$ is an ordered pair $\left[
x,y\right]  $ of numbers, that is an element of $\mathbb{F}^{2}.$ The
\textbf{quadrance\ }$Q\left(  A_{1},A_{2}\right)  $ between points
$A_{1}\equiv\left[  x_{1},y_{1}\right]  $ and $A_{2}\equiv\left[  x_{2}%
,y_{2}\right]  $ is the number
\[
Q\left(  A_{1},A_{2}\right)  \equiv\left(  x_{2}-x_{1}\right)  ^{2}+\left(
y_{2}-y_{1}\right)  ^{2}.
\]

A \textbf{line} $l$ is an ordered proportion $\left\langle a:b:c\right\rangle
$, where $a$ and $b$ are not both zero. This represents the equation
$ax+by+c=0.$ Such a line is \textbf{null} precisely when
\[
a^{2}+b^{2}=0.
\]
Null lines occur precisely when $-1$ is a square. For distinct points
$A_{1}=\left[  x_{1},y_{1}\right]  $ and $A_{2}=\left[  x_{2},y_{2}\right]  $
the line passing through them both is
\[
A_{1}A_{2}=\left\langle y_{1}-y_{2}:x_{2}-x_{1}:x_{1}y_{2}-x_{2}%
y_{1}\right\rangle .
\]
Two lines $l_{1}\equiv\left\langle a_{1}:b_{1}:c_{1}\right\rangle $ and
$l_{2}\equiv\left\langle a_{2}:b_{2}:c_{2}\right\rangle $ are
\textbf{perpendicular} precisely when
\[
a_{1}a_{2}+b_{1}b_{2}=0.
\]
For any fixed line $l$ and any point $A,$ there is a unique line $n$ passing
through $A$ and perpendicular to $l,$ called the \textbf{altitude} from $A$ to
$l.$ If $l$ is a non-null line then the altitude $n$ meets $l$ at a unique
point $F,$ called the \textbf{foot }of the altitude. In this case we may
define the \textbf{reflection of }$A$\textbf{\ in }$l$ to be the point
$\sigma_{l}\left(  A\right)  $ such that $F$ is the midpoint of the side
$\overline{A\sigma_{l}\left(  A\right)  }$. If $m$ is another line, then the
\textbf{reflection of }$m$\textbf{\ in }$l$ is the line $\Sigma_{l}\left(
m\right)  $ with the property that the reflection in $l$ of any point $A$ on
$m$ lies on $\Sigma_{l}\left(  m\right)  $.

The \textbf{spread}\textit{\ }$s\left(  l_{1},l_{2}\right)  $ between non-null
lines $l_{1}\equiv\left\langle a_{1}:b_{1}:c_{1}\right\rangle $ and
$l_{2}\equiv\left\langle a_{2}:b_{2}:c_{2}\right\rangle $ is the number%
\[
s\left(  l_{1},l_{2}\right)  \equiv\frac{\left(  a_{1}b_{2}-a_{2}b_{1}\right)
^{2}}{\left(  a_{1}^{2}+b_{1}^{2}\right)  \left(  a_{2}^{2}+b_{2}^{2}\right)
}=1-\frac{\left(  a_{1}a_{2}+b_{1}b_{2}\right)  ^{2}}{\left(  a_{1}^{2}%
+b_{1}^{2}\right)  \left(  a_{2}^{2}+b_{2}^{2}\right)  }.
\]
This number $s=s\left(  l_{1},l_{2}\right)  $ is $0$ precisely when the lines
are parallel, and $1$ precisely when the lines are perpendicular. It has the
property that $s\left(  1-s\right)  $ is a square in the field, and every such
\textbf{spread number} $s$ can be shown to be the spread between some two lines.

The spread between lines may alternatively be expressed as a ratio of
quadrances: if $l_{1}$ and $l_{2}$ intersect at a point $A,$ choose any other
point $B$ on $l_{1}$, and let $C$ on $l_{2}$ be the foot of the altitude line
from $B$ to $l_{2},$ then
\[
s\left(  l_{1},l_{2}\right)  =\frac{Q\left(  B,C\right)  }{Q\left(
A,B\right)  }.
\]
Reflection in a line preserves quadrance between points and spread between
lines. Given three distinct points $A_{1},A_{2}$ and $A_{3}$, we use the
notation%
\[%
\begin{tabular}
[c]{lllll}%
$Q_{1}\equiv Q\left(  A_{2},A_{3}\right)  $ &  & $Q_{2}\equiv Q\left(
A_{1},A_{3}\right)  $ &  & $Q_{3}\equiv Q\left(  A_{1},A_{2}\right)  $%
\end{tabular}
\]
and%
\[%
\begin{tabular}
[c]{lllll}%
$s_{1}\equiv s\left(  A_{1}A_{2},A_{1}A_{3}\right)  $ &  & $s_{2}\equiv
s\left(  A_{2}A_{1},A_{2}A_{3}\right)  $ &  & $s_{3}\equiv s\left(  A_{3}%
A_{1},A_{3}A_{2}\right)  .$%
\end{tabular}
\]
A \textbf{triangle} $\overline{A_{1}A_{2}A_{3}}$ is a set of three
non-collinear points, and is \textbf{non-null} precisely when its three lines
$A_{1}A_{2},A_{2}A_{3}$ and $A_{1}A_{3}$ are non-null. Here are the five main
laws of rational trigonometry, which may be viewed as purely algebraic
identities involving only rational functions.

\begin{description}
\item[Triple quad formula] The points $A_{1},A_{2}$ and $A_{3}$ are collinear
precisely when%
\[
\left(  Q_{1}+Q_{2}+Q_{3}\right)  ^{2}=2\left(  Q_{1}^{2}+Q_{2}^{2}+Q_{3}%
^{2}\right)  .
\]

\item[Pythagoras' theorem] The lines $A_{1}A_{3}$ and $A_{2}A_{3}$ are
perpendicular precisely when%
\[
Q_{1}+Q_{2}=Q_{3}.
\]

\item[Spread law] For a non-null triangle $\overline{A_{1}A_{2}A_{3}}$
\[
\frac{s_{1}}{Q_{1}}=\frac{s_{2}}{Q_{2}}=\frac{s_{3}}{Q_{3}}.
\]

\item[Cross law] For a non-null triangle $\overline{A_{1}A_{2}A_{3}}$ define
the \textbf{cross} $c_{3}\equiv1-s_{3}.$ Then
\[
\left(  Q_{1}+Q_{2}-Q_{3}\right)  ^{2}=4Q_{1}Q_{2}c_{3}.
\]

\item[Triple spread formula] For a non-null triangle $\overline{A_{1}%
A_{2}A_{3}}$%
\[
\left(  s_{1}+s_{2}+s_{3}\right)  ^{2}=2\left(  s_{1}^{2}+s_{2}^{2}+s_{3}%
^{2}\right)  +4s_{1}s_{2}s_{3}.
\]

\end{description}

See \cite{Wild} for proofs, and many more facts about geometry in such a
purely algebraic setting.

\section*{Neuberg cubics}

Many interesting points, lines, circles, parabolas, hyperbolas and cubics have
been associated to a triangle in the plane, such as the centroid $G,$
orthocenter $O$, circumcenter $C,$ incenter $I,$ Euler line $e$ (which passes
through $O,G$ and $C$), nine-point circle and so on. Perhaps the most
remarkable of these is the \textit{Neuberg cubic}, which in the earlier
literature was called the $32$ point cubic, but these days is known to pass
through many more triangle centers (see \cite{EG}, \cite{Kimberling1},
\cite{Kimberling2}). Most such objects depend crucially on a metrical
structure on the affine plane.

Figure 1 shows the Neuberg cubic for the triangle $\overline{A_{1}A_{2}A_{3}}$
with vertices $A_{1}=\left[  0,0\right]  $, $A_{2}=\left[  1,0\right]  $ and
$A_{3}=\left[  3/4,3/4\right]  $. For this triangle the Euler line, which
passes through the orthocenter $O$ and the circumcenter $C,$ is horizontal.
Various incenters $I_{i}$ are shown, as well as reflections of the vertices in
the sides. These are just a few of the many points on the Neuberg cubic. Note
that the tangents to the four incenters are also horizontal, and it turns out
that the asymptote of the cubic is also.%
\begin{figure}
[h]
\begin{center}
\includegraphics[
height=6.0385cm,
width=10.9465cm
]%
{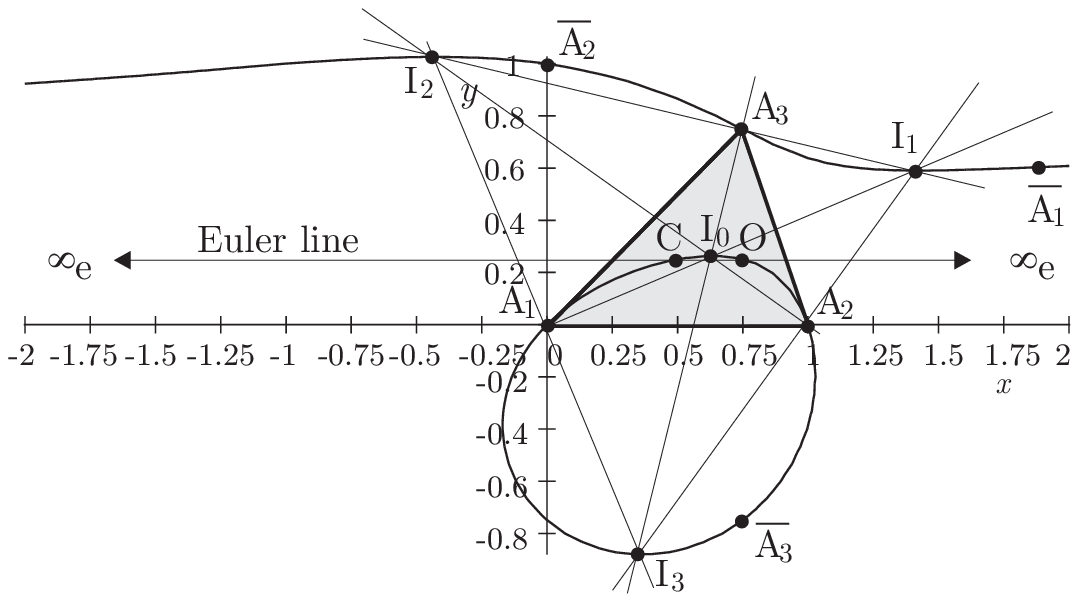}%
\caption{Neuberg cubic for $A_{1}=\left[  0,0\right]  $, $A_{2}=\left[
1,0\right]  $ and $A_{3}=\left[  3/4,3/4\right]  $}%
\label{NeubergReal}%
\end{center}
\end{figure}

With the completely algebraic language of rational trigonometry, we consider
such a picture for \textit{triangles in finite fields}. For this it will be
convenient to alter the usual point of view somewhat, elevating the quadrangle
$\overline{I_{0}I_{1}I_{2}I_{3}}$ of \textit{incenters} to a primary position.
This ensures that the reference triangle $\overline{A_{1}A_{2}A_{3}}$ actually
has vertex bisectors.

For a triangle $\overline{I_{1}I_{2}I_{3}}$ the altitudes from each point to
the opposite side intersect at the orthocenter, which we here denote $I_{0}$.
In terms of Cartesian coordinates $I_{j}=\left[  x_{j,}y_{j}\right]  $,
\[
x_{0}\equiv\frac{\left(
\begin{array}
[c]{c}%
x_{1}x_{2}y_{2}-x_{1}x_{3}y_{3}+x_{2}x_{3}y_{3}-x_{3}x_{2}y_{2}+x_{3}%
x_{1}y_{1}-x_{2}x_{1}y_{1}\\
+y_{1}y_{2}^{2}-y_{1}y_{3}^{2}+y_{2}y_{3}^{2}-y_{3}y_{2}^{2}+y_{3}y_{1}%
^{2}-y_{2}y_{1}^{2}%
\end{array}
\right)  }{\allowbreak x_{1}y_{2}-x_{1}y_{3}+x_{2}y_{3}-x_{3}y_{2}+x_{3}%
y_{1}-x_{2}y_{1}}%
\]
and%
\[
y_{0}\equiv\frac{\left(
\begin{array}
[c]{c}%
x_{1}y_{1}y_{2}-x_{1}y_{1}y_{3}+x_{2}y_{2}y_{3}-x_{3}y_{3}y_{2}+x_{3}%
y_{3}y_{1}-x_{2}y_{2}y_{1}\\
+x_{1}^{2}x_{2}-x_{1}^{2}x_{3}+x_{2}^{2}x_{3}-x_{3}^{2}x_{2}+x_{3}^{2}%
x_{1}-x_{2}^{2}x_{1}%
\end{array}
\right)  }{\allowbreak x_{1}y_{2}-x_{1}y_{3}+x_{2}y_{3}-x_{3}y_{2}+x_{3}%
y_{1}-x_{2}y_{1}}.
\]
In terms of barycentric coordinates, if the quadrances of the triangle
$\overline{I_{1}I_{2}I_{3}}$ are $R_{1},R_{2}$ and $R_{3}$ and
\begin{align*}
\mathcal{A}  &  =\left(  R_{1}+R_{2}+R_{3}\right)  ^{2}-2\left(  R_{1}%
^{2}+R_{2}^{2}+R_{3}^{3}\right) \\
&  =4\left(  x_{1}y_{2}-x_{1}y_{3}+x_{2}y_{3}-x_{3}y_{2}+x_{3}y_{1}-x_{2}%
y_{1}\right)  ^{2}%
\end{align*}
is the \textbf{quadrea} of the triangle (sixteen times the square of the area
in the decimal number situation), then $I_{0}=\beta_{1}I_{1}+\beta_{2}%
I_{2}+\beta_{3}I_{3}$ where
\begin{align*}
\beta_{1}  &  \equiv\left(  R_{3}+R_{1}-R_{2}\right)  \left(  R_{1}%
+R_{2}-R_{3}\right)  /\mathcal{A}\\
\beta_{2}  &  \equiv\left(  R_{1}+R_{2}-R_{3}\right)  \left(  R_{2}%
+R_{3}-R_{1}\right)  /\mathcal{A}\\
\beta_{3}  &  \equiv\left(  R_{2}+R_{3}-R_{1}\right)  \left(  R_{3}%
+R_{1}-R_{2}\right)  /\mathcal{A}.
\end{align*}

If $\overline{I_{1}I_{2}I_{3}}$ is non-null, which we henceforth assume, then
the feet of its altitudes exist and we call them respectively $A_{1},A_{2}$
and $A_{3}.$ Thus for example $I_{1},I_{0}$ and $A_{1}$ are collinear points
which lie on a line perpendicular to $I_{2}I_{3}.$ In the quadrangle
$\overline{I_{1}I_{2}I_{3}I_{4}}$ we have a complete symmetry between the four
points $I_{0},I_{1},I_{2}$ and $I_{3}.$ So we could have started with any
three of these points, and the orthocenter of such a triangle would have been
the fourth point, with the \textbf{orthic triangle} $\overline{A_{1}A_{2}%
A_{3}}$ obtained always the same.

\begin{theorem}
[Orthic triangle]The lines $I_{0}I_{1}$ and $I_{2}I_{3}$ are bisectors of the
vertex of $\overline{A_{1}A_{2}A_{3}}$ at $A_{1},$ in the sense that
\begin{align*}
s\left(  I_{0}I_{1},A_{1}A_{2}\right)   &  =s\left(  I_{0}I_{1},A_{1}%
A_{3}\right) \\
s\left(  I_{2}I_{3},A_{1}A_{2}\right)   &  =s\left(  I_{2}I_{3},A_{1}%
A_{3}\right)  .
\end{align*}

\end{theorem}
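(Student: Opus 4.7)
The plan is to compute both pairs of spreads directly in Cartesian coordinates adapted to the right angle at $A_{1}$. After translating so that $A_{1}=[0,0]$, the perpendicular lines $I_{0}I_{1}$ and $I_{2}I_{3}$ both pass through the origin, so I can choose a direction vector $(a,b)$ for the (non-null) line $I_{2}I_{3}$, write $N:=a^{2}+b^{2}$, and parametrise
\[
I_{2}=[at_{2},\,bt_{2}],\quad I_{3}=[at_{3},\,bt_{3}],\quad I_{1}=[-bu_{1},\,au_{1}]
\]
for scalars $t_{2},t_{3},u_{1}\in\mathbb{F}$, with $I_{0}I_{1}$ running along $(-b,a)$. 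The precise position of $I_{0}$ on $I_{0}I_{1}$ will not enter the spread identities, so it can be ignored.

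The main step, and the only real obstacle, is to compute the feet $A_{2}$ (of the altitude from $I_{2}$ to $I_{1}I_{3}$) and $A_{3}$ (from $I_{3}$ to $I_{1}I_{2}$) via the standard orthogonal-projection formula, and to verify that, on setting
\[
P := t_{2}t_{3}+u_{1}^{2},\qquad R := u_{1}(t_{2}-t_{3}),
\]
the resulting direction vectors from $A_{1}$ take the symmetric form
\[
A_{2}\,\propto\,(aP+bR,\; bP-aR), \qquad A_{3}\,\propto\,(aP-bR,\; bP+aR).
\]
This is purely a matter of careful projection and factorisation—the only calculation in the proof—and once in hand it essentially is the geometric content of the theorem: the two direction vectors differ only by the sign flip $R\mapsto -R$.

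Plugging these directions into the spread formula for lines through the origin, the rest is mechanical. The norm identity $(aP\pm bR)^{2}+(bP\mp aR)^{2}=N(P^{2}+R^{2})$ is independent of the sign choice, and the cross products
\[
(-b)(bP\mp aR)-a(aP\pm bR)=-NP,\qquad a(bP\mp aR)-b(aP\pm bR)=\mp NR
\]
collapse upon squaring to common values. Hence
\[
s(I_{0}I_{1},A_{1}A_{j})=\frac{P^{2}}{P^{2}+R^{2}},\qquad s(I_{2}I_{3},A_{1}A_{j})=\frac{R^{2}}{P^{2}+R^{2}},\qquad j=2,3,
\]
establishing both bisector identities simultaneously. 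Conceptually, the flip $R\mapsto -R$ is the action on direction vectors of reflection in $I_{0}I_{1}$, so the theorem is really saying that this reflection interchanges the lines $A_{1}A_{2}$ and $A_{1}A_{3}$ (and likewise for reflection in $I_{2}I_{3}$), in agreement with the general principle that reflection preserves spread.
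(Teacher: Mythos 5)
Your proof is correct, and I verified the key computation: with $A_{1}$ at the origin, $I_{2}=[at_{2},bt_{2}]$, $I_{3}=[at_{3},bt_{3}]$, $I_{1}=[-bu_{1},au_{1}]$, the foot of the altitude from $I_{2}$ to $I_{1}I_{3}$ works out to $\frac{t_{3}}{t_{3}^{2}+u_{1}^{2}}\left(aP+bR,\;bP-aR\right)$ with $P=t_{2}t_{3}+u_{1}^{2}$ and $R=u_{1}(t_{2}-t_{3})$, and $A_{3}$ is obtained by swapping $t_{2}\leftrightarrow t_{3}$, which flips the sign of $R$; the spread computations then go through exactly as you state. The paper, by contrast, offers no human-readable argument at all: it simply asserts that ``the proof uses a computer'' and records the resulting spread values in general coordinates $[x_{i},y_{i}]$. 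So while both proofs are in essence direct Cartesian verifications, yours buys something real: the adapted frame at $A_{1}$ (exploiting that the altitude $I_{0}I_{1}$ and the side $I_{2}I_{3}$ are perpendicular lines meeting at $A_{1}$) reduces the verification to a few lines of hand algebra, and it exposes the structural reason the theorem holds, namely that reflection in either of the perpendicular lines acts on direction vectors by $R\mapsto-R$ and hence interchanges $A_{1}A_{2}$ with $A_{1}A_{3}$. Two small points worth a sentence in a final write-up: you should note the nondegeneracy hypotheses implicitly used ($t_{2},t_{3},u_{1}\neq0$ and $t_{2}\neq t_{3}$, i.e.\ no right angle in $\overline{I_{1}I_{2}I_{3}}$, so that the orthic triangle is genuine; and $t_{j}^{2}+u_{1}^{2}\neq0$, $P^{2}+R^{2}\neq0$ so that the feet exist and the lines $A_{1}A_{j}$ are non-null, matching the paper's standing non-nullity assumption). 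Your result $s(I_{0}I_{1},A_{1}A_{j})+s(I_{2}I_{3},A_{1}A_{j})=1$ also recovers the paper's remark that the two spreads sum to $1$.
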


The proof uses a computer, and one can further verify that the former spread
is%
\[
\frac{\left(  x_{2}x_{3}-x_{1}x_{3}-x_{1}x_{2}-y_{1}y_{2}-y_{1}y_{3}%
+y_{2}y_{3}+x_{1}^{2}+y_{1}^{2}\right)  ^{2}}{R\allowbreak_{2}R_{3}}%
\]
while the latter spread is%
\[
\frac{\left(  x_{1}y_{2}-x_{1}y_{3}+x_{2}y_{3}-x_{3}y_{2}+x_{3}y_{1}%
-x_{2}y_{1}+x_{3}y_{2}\right)  ^{2}}{R\allowbreak_{2}R_{3}}.
\]
These two spreads sum to $1,$ as they must since $I_{0}I_{1}$ and $I_{2}I_{3}
$ are perpendicular. So the triangle $\overline{A_{1}A_{2}A_{3}}$ has a
special property: each of its vertices has bisectors. Over the decimal
numbers, every triangle has vertex bisectors, but in \cite{Wild} it is shown
that in general this amounts to the condition that the spreads of the triangle
are \textit{squares}.

For a point $P,$ let $P_{1},P_{2}$ and $P_{3}$ denote its reflections in the
sides $A_{2}A_{3},$ $A_{1}A_{3}$ and $A_{1}A_{2}$ respectively, and define the
\textbf{Neuberg cubic} $N_{c}$ of $\overline{A_{1}A_{2}A_{3}}$ to be the locus
of points $P$ such that $P_{1},P_{2}$ and $P_{3}$ are perspective with
$A_{1},A_{2}$ and $A_{3}$ respectively: in other words that $P_{1}A_{1}%
,P_{2}A_{2}$ and $P_{3}A_{3}$ are concurrent lines.

\section*{An example over $\mathbb{F}_{23}$}

We work in the prime field $\mathbb{F}_{23}$, in which the squares are
$1,4,9,16,2,13,3,18,12,8$ and $6.$ Note that $-1$ is not a square, but that
$3=7^{2}$ is a square. The latter fact implies that equilateral triangles
exist in $\mathbb{F}_{23}^{2}$. Let%
\[%
\begin{tabular}
[c]{lllll}%
$I_{1}=\left[  6,4\right]  $ &  & $I_{2}=\left[  22,22\right]  $ &  &
$I_{3}=\left[  21,12\right]  .$%
\end{tabular}
\]
These points have been chosen so that the orthocenter of $\overline{I_{1}%
I_{2}I_{3}}$ is $I_{0}=\left[  0,0\right]  $. The feet of the altitudes are%
\[%
\begin{tabular}
[c]{lllll}%
$A_{1}=\left[  13,1\right]  $ &  & $A_{2}=\left[  5,5\right]  $ &  &
$A_{3}=\left[  2,11\right]  .$%
\end{tabular}
\]
The lines of $\overline{A_{1}A_{2}A_{3}}$ are%
\[%
\begin{tabular}
[c]{lllll}%
$A_{1}A_{2}=\left\langle 3:6:1\right\rangle $ &  & $A_{2}A_{3}=\left\langle
6:3:1\right\rangle $ &  & $A_{1}A_{2}=\left\langle 12:4:1\right\rangle $%
\end{tabular}
\]
and the spreads of the triangle $\overline{A_{1}A_{2}A_{3}}$ are
\[%
\begin{tabular}
[c]{lllll}%
$s_{1}=12$ &  & $s_{2}=16$ &  & $s_{3}=6.$%
\end{tabular}
\]
Note that as expected these numbers are squares, and one can check that
\begin{align*}
s\left(  A_{1}I_{0},A_{1}A_{2}\right)   &  =s\left(  A_{1}I_{0},A_{1}%
A_{3}\right)  =5\\
s\left(  A_{2}I_{0},A_{2}A_{1}\right)   &  =s\left(  A_{2}I_{0},A_{2}%
A_{3}\right)  =-6\\
s\left(  A_{3}I_{0},A_{3}A_{1}\right)   &  =s\left(  A_{3}I_{0},A_{3}%
A_{2}\right)  =-7.
\end{align*}
The connection between for example the spread $s=5$ and the spread of its
`double' $r=12$ is given by the \textit{second spread polynomial},
\[
r=S_{2}\left(  s\right)  =4s\left(  1-s\right)
\]
which in chaos theory is known as the \textit{logistic map}. The spread
polynomials have many remarkable properties that hold also over finite fields,
see \cite{Wild}.

We need the following formula for a reflection.

\begin{theorem}
[Reflection of a point in a line]If $l\equiv\left\langle a:b:c\right\rangle $
is a non-null line and $A\equiv\left[  x,y\right]  $, then%
\[
\sigma_{l}\left(  A\right)  =\left[  \frac{\left(  b^{2}-a^{2}\right)
x-2aby-2ac}{a^{2}+b^{2}},\frac{-2abx+\left(  a^{2}-b^{2}\right)  y-2bc}%
{a^{2}+b^{2}}\right]  .
\]

\end{theorem}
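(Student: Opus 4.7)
The plan is to reduce the claim to the defining property: $\sigma_{l}(A)$ is the point for which the foot $F$ of the altitude from $A$ to $l$ is the midpoint of $A$ and $\sigma_{l}(A)$. Equivalently, $\sigma_{l}(A)=2F-A$, so it suffices to compute $F$ explicitly and then double and subtract.

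First I would construct the altitude $n$ from $A=[x,y]$ to $l=\langle a:b:c\rangle$. Using the perpendicularity condition $a_{1}a_{2}+b_{1}b_{2}=0$, any line perpendicular to $l$ has the form $\langle b:-a:c'\rangle$, and requiring it to pass through $A$ fixes $c'=ay-bx$. Thus $n=\langle b:-a:ay-bx\rangle$. Because $l$ is non-null we have $a^{2}+b^{2}\neq 0$, so $n$ and $l$ meet in a unique point, namely the foot $F=[x_{F},y_{F}]$ satisfying the $2\times 2$ linear system
\[
ax_{F}+by_{F}=-c,\qquad bx_{F}-ay_{F}=bx-ay.
\]
Applying Cramer's rule, whose determinant $-a^{2}-b^{2}$ is invertible, yields
\[
x_{F}=\frac{b^{2}x-aby-ac}{a^{2}+b^{2}},\qquad y_{F}=\frac{a^{2}y-abx-bc}{a^{2}+b^{2}}.
\]

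Finally I would substitute into $\sigma_{l}(A)=[2x_{F}-x,\,2y_{F}-y]$, put over the common denominator $a^{2}+b^{2}$, and collect terms; the coefficient of $x$ in the first coordinate becomes $2b^{2}-(a^{2}+b^{2})=b^{2}-a^{2}$, and similarly for the second coordinate, producing exactly the stated formula. There is no real obstacle here: the only subtlety is keeping track of signs when applying Cramer's rule, and invoking non-nullness of $l$ to justify dividing by $a^{2}+b^{2}$. Since the derivation uses only field operations and the algebraic definitions of perpendicularity and midpoint, it is valid over any field of characteristic not two, and hence applies uniformly in the finite-field setting of the paper.
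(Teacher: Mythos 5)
Your proof is correct: the paper states this theorem without proof (its coordinate formulas are left as direct calculations), and your derivation --- computing the foot $F$ of the altitude via the perpendicular line $\langle b:-a:ay-bx\rangle$ and Cramer's rule, then setting $\sigma_{l}(A)=2F-A$ in accordance with the paper's midpoint definition of reflection --- is exactly the intended verification. The computation checks out, and you correctly identify the only hypotheses used: non-nullness to invert $a^{2}+b^{2}$ and characteristic not two for the midpoint.
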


Using this, the reflections of $P=\left[  x,y\right]  $ in the sides of
$\overline{A_{1}A_{2}A_{3}}$ are%
\begin{align*}
P_{1}  &  =\left[  4x+13y+12,13x+19y+6\right] \\
P_{2}  &  =\left[  13x+4y+1,4x+10y+8\right] \\
P_{3}  &  =\left[  19x+13y+6,13x+4y+12\right]  .
\end{align*}
The lines $P_{1}A_{1},P_{2}A_{2}$ and $P_{3}A_{3}$ are then%
\begin{align*}
&  \left\langle 13x+19y+5:19x+10y+1:19x+19y+3\right\rangle \\
&  \left\langle 4x+10y+3;10x+19y+4:22x+16y+11\right\rangle \\
&  \left\langle 13x+4y+1:4x+10y+19:22x+20y+19\right\rangle
\end{align*}
and these are concurrent precisely when%
\[%
\begin{vmatrix}
13x+19y+5 & 19x+10y+1 & 19x+19y+3\\
4x+10y+3 & 10x+19y+4 & 22x+16y+11\\
13x+4y+1 & 4x+10y+19 & 22x+20y+19
\end{vmatrix}
=0.
\]
Expanding gives the Neuberg cubic $\overline{A_{1}A_{2}A_{3}}:$ an affine
curve over $\mathbb{F}_{23}$ with equation%
\begin{equation}
y^{3}+x^{2}y+22y^{2}+7xy+9x^{2}+13y=0. \label{Neuberg}%
\end{equation}
The tangent line to a point $\left[  a,b\right]  $ on the curve has equation%
\[
x\left(  18a+7b+2ab\right)  +\allowbreak y\left(  7a+21b+a^{2}+3b^{2}%
+13\right)  +3b+7ab+9a^{2}+22b^{2}=0.
\]

There is another revealing way to obtain the Neuberg cubic.

\begin{theorem}
[Reflection of a line in a line]The reflection in the non-null line
$l\equiv\left\langle a:b:c\right\rangle $ sends $\left\langle a_{1}%
:b_{1}:c_{1}\right\rangle $ to%
\[
\left\langle \left(  a^{2}-b^{2}\right)  a_{1}+2abb_{1}:2aba_{1}-\left(
a^{2}-b^{2}\right)  b_{1}:2aca_{1}+2bcb_{1}-\left(  a^{2}+b^{2}\right)
c_{1}\right\rangle .
\]

\end{theorem}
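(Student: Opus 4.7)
The plan is to reduce the line-reflection formula to the point-reflection formula already established. Since $\sigma_l$ is an involution (reflecting twice in $l$ returns any point to itself), the defining property of $\Sigma_l(m)$ can be rewritten as: a point $A = [x,y]$ lies on $\Sigma_l(m)$ precisely when $\sigma_l(A)$ lies on $m$. So I will obtain the equation of $\Sigma_l(m)$ by substituting the coordinates of $\sigma_l([x,y])$, as given by the previous theorem, into the equation $a_1 X + b_1 Y + c_1 = 0$ of $m$.

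Concretely, I would set
\[
X = \frac{(b^2-a^2)x - 2aby - 2ac}{a^2+b^2}, \qquad Y = \frac{-2abx + (a^2-b^2)y - 2bc}{a^2+b^2},
\]
substitute into $a_1 X + b_1 Y + c_1 = 0$, clear the denominator $a^2+b^2$ (which is nonzero since $l$ is non-null), and collect the coefficients of $x$, $y$, and the constant term. Negating the resulting equation gives exactly
\[
\bigl((a^2-b^2)a_1 + 2abb_1\bigr)x + \bigl(2aba_1 - (a^2-b^2)b_1\bigr)y + \bigl(2aca_1 + 2bcb_1 - (a^2+b^2)c_1\bigr) = 0,
\]
which is the proposed line $\Sigma_l(m)$.

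To confirm that this expression is a legitimate line proportion, I would check that the first two coefficients are not simultaneously zero. If they were, then $a_1$ and $b_1$ would satisfy a homogeneous $2\times 2$ linear system whose determinant simplifies to $-(a^2+b^2)^2$, and this is nonzero by the non-null hypothesis on $l$. Hence $a_1 = b_1 = 0$, contradicting the assumption that $\langle a_1:b_1:c_1\rangle$ represents a line.

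No part of the argument poses a real obstacle; the only thing to be careful about is the bookkeeping of signs when clearing the denominator and the invocation of the involution property of $\sigma_l$, which is implicit in the definition of $\Sigma_l$ given earlier in the paper. Everything else is a direct substitution from the point-reflection theorem.
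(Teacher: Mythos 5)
Your proof is correct: the paper states this theorem without a printed proof, treating it (like the neighbouring results) as a routine coordinate verification, and your reduction to the point-reflection theorem via the involution property of $\sigma_l$, followed by substitution and sign-clearing, is exactly that computation carried out cleanly. The check that the first two coefficients cannot both vanish (determinant $-(a^2+b^2)^2\neq 0$ for a non-null line) is a nice touch that confirms the output is a genuine line proportion.
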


In a triangle with vertex bisectors, the reflection of a line through a given
vertex in either of the vertex bisectors at that vertex is the same.

\begin{theorem}
[Isogonal conjugates]If a triangle $\overline{A_{1}A_{2}A_{3}}$ has vertex
bisectors at each vertex, then for any point $P$ the reflections of $A_{1}P,$
$A_{2}P$ and $A_{3}P$ in the vertex bisectors at $A_{1},A_{2}$ and $A_{3}$
respectively are concurrent.
\end{theorem}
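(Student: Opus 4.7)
The plan is to reduce the statement to a spread-theoretic analogue of the trigonometric form of Ceva's theorem, then observe that reflection in a vertex bisector inverts exactly the ratios appearing in that criterion. To set up, fix a point $P$ off the sides, let $D_i = A_iP \cap A_{i+1}A_{i+2}$ (indices cyclic mod $3$), and write $\alpha_i \equiv s(A_iP,A_iA_{i+1})$ and $\beta_i \equiv s(A_iP,A_iA_{i+2})$. Applying the Spread law in each of the two subtriangles $\overline{A_iA_{i+1}D_i}$ and $\overline{A_iA_{i+2}D_i}$ carved off by the cevian at $A_i$ yields
\[
\frac{Q(A_{i+1},D_i)}{Q(A_{i+2},D_i)} \;=\; \frac{Q_{i+2}\,\alpha_i}{Q_{i+1}\,\beta_i},
\]
and multiplying the three identities causes the $Q_j$ factors to telescope. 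The resulting squared Ceva ratio $\prod_{i=1}^{3} Q(A_{i+1},D_i)/Q(A_{i+2},D_i)$ is then just $\prod_{i=1}^{3}\alpha_i/\beta_i$; since the cevians $A_1P, A_2P, A_3P$ concur at $P$, this product equals $1$.

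Next, I would unpack what reflection in a vertex bisector does to these ratios. Using the Reflection of a line in a line formula together with the preservation of spread under reflection, the defining condition $s(m_i,A_iA_{i+1}) = s(m_i,A_iA_{i+2})$ forces $\Sigma_{m_i}$ to fix $A_i$ and interchange the two lines $A_iA_{i+1}, A_iA_{i+2}$ through $A_i$ (a short direct computation with the reflection formula). Writing $A_iP' \equiv \Sigma_{m_i}(A_iP)$, spread preservation then gives $\alpha_i' = \beta_i$ and $\beta_i' = \alpha_i$.

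Combining these, the spread-Ceva ratio for the reflected triple is
\[
\prod_{i=1}^{3}\frac{\alpha_i'}{\beta_i'} \;=\; \prod_{i=1}^{3}\frac{\beta_i}{\alpha_i} \;=\; 1,
\]
and invoking the converse direction of the spread Ceva criterion gives the desired concurrency. The main obstacle is precisely that converse: squaring signed lengths to turn them into quadrances discards a sign, so the identity $\prod \alpha_i/\beta_i = 1$ is necessary but not obviously sufficient for concurrency over an arbitrary field. I would bypass this by exhibiting the common point directly: carrying the ratio analysis through in barycentric coordinates $(p_1:p_2:p_3)$ for $P$ shows that each reflected line $A_iP'$ passes through the single point $P^{*}$ with barycentrics $(Q_1/p_1:Q_2/p_2:Q_3/p_3)$, the ``isogonal conjugate'' of $P$. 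That explicit incidence check works over any field in which the vertex bisectors exist, and completes the proof.
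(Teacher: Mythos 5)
Your Ceva-type bookkeeping is sound as far as it goes: the spread-law computation in the two subtriangles does give $Q(A_{i+1},D_i)/Q(A_{i+2},D_i)=Q_{i+2}\alpha_i/(Q_{i+1}\beta_i)$, the telescoping is correct, and you rightly observe that reflection in a vertex bisector swaps $\alpha_i$ and $\beta_i$. You also correctly identify the fatal weakness of this route: quadrances only see the \emph{squares} of affine ratios, so $\prod\alpha_i'/\beta_i'=1$ is compatible with the product of signed ratios being $-1$, i.e.\ with the three reflected cevians having collinear feet (the Menelaus configuration) rather than being concurrent. Over a finite field there is no ordering to break this tie, so the converse of your spread-Ceva criterion genuinely fails as stated.

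The problem is that your proposed escape does not escape. Saying that ``carrying the ratio analysis through in barycentric coordinates shows each reflected line passes through $(Q_1/p_1:Q_2/p_2:Q_3/p_3)$'' is not a consequence of the ratio analysis: from $Q(A_2,D_1')/Q(A_3,D_1')=Q_3^2p_2^2/(Q_2^2p_3^2)$ you can only conclude that the foot of the reflected cevian is $(0:Q_2/p_2:\pm Q_3/p_3)$, and the sign ambiguity at each vertex is exactly the ambiguity you were trying to avoid. Pinning down the correct sign requires actually computing the reflected line --- e.g.\ with the paper's reflection-of-a-line-in-a-line formula applied to $A_iP$ and the bisector $\langle a:b:c\rangle$ at $A_i$ --- and then verifying the incidence with the candidate point. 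That computation is the entire content of the theorem, and it is precisely what the paper does (its proof is stated to be ``a calculation using coordinates''). So your candidate point is the right one and your framing explains \emph{why} one expects the result, but as written the proof has a gap: the one step that cannot be deduced from spread/quadrance ratios alone is asserted rather than carried out.
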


The point of concurrence of these lines is $P^{\ast},$ the \textbf{isogonal
conjugate} of $P=\left[  x,y\right]  .$ The proof again is a calculation using
coordinates. We may use the reflection of a line in a line theorem to
establish a precise formula for $P^{\ast}$ in the special case of our example
reference triangle $\overline{A_{1}A_{2}A_{3}}$:%
\[
P^{\ast}=\left[  \frac{2x+22xy+2x^{2}+17y^{2}}{4x+20y+5x^{2}+5y^{2}+21}%
,\frac{\allowbreak2y+15xy+x^{2}+2y^{2}}{4x+20y+5x^{2}+5y^{2}+21}\right]  .
\]

Over the decimal numbers, the Neuberg cubic is \textit{also }the locus of
those $P=\left[  x,y\right]  $ such that $PP^{\ast}$ is parallel to the Euler
line. We can verify this also in our finite example, since this condition
amounts to%
\[
y=\frac{2y+15xy+x^{2}+2y^{2}}{4x+20y+5x^{2}+5y^{2}+21}%
\]
which in turn is equivalent to the equation (\ref{Neuberg}) of $N_{c}.$ It
follows that if $P$ lies on $N_{c},$ then so does $P^{\ast}$---this is a
useful way to obtain new points from old ones. In fact the Euler line is
parallel to the tangent to $N_{c}$ at the infinite point $\infty_{e}$.

The cubic (\ref{Neuberg}) is nonsingular, has $27$ points lying on it, and its
projective extension has one more point at infinity, namely $\infty
_{e}=\left[  1:0:0\right]  $. Here are all the points, the notation will be
explained more fully below:%
\[%
\begin{tabular}
[c]{llll}%
$\left[  0,0\right]  =I_{0}$ & $\left[  0,8\right]  =E_{3}^{\prime}$ &
$\left[  0,16\right]  =S^{\prime}$ & $\left[  2,11\right]  =A_{3}$\\
$\left[  3,13\right]  =S=\overline{A_{3}}$ & $\left[  4,5\right]  $ & $\left[
5,5\right]  =A_{2}$ & $\left[  5,14\right]  =\infty_{e}^{\ast}$\\
$\left[  6,4\right]  =I_{1}$ & $\left[  7,1\right]  $ & $\left[  7,2\right]
=E_{2}^{\prime}$ & $\left[  7,21\right]  =O$\\
$\left[  8,10\right]  =\overline{A_{1}}=E_{2}$ & $\left[  13,1\right]  =A_{1}$
& $\left[  13,7\right]  $ & $\left[  13,16\right]  =F^{\prime}$\\
$\left[  14,9\right]  $ & $\left[  16,11\right]  $ & $\left[  17,7\right]
=E_{1}^{\prime}$ & $\left[  17,8\right]  $\\
$\left[  17,9\right]  =\overline{A_{2}}=E_{1}$ & $\left[  18,21\right]
=C=E_{3}$ & $\left[  19,13\right]  =F$ & $\left[  21,2\right]  $\\
$\left[  21,10\right]  $ & $\left[  21,12\right]  =I_{3}$ & $\left[
22,22\right]  =I_{2}$ & $\left[  1:0:0\right]  =\infty_{e}$%
\end{tabular}
\]
%

\begin{figure}
[h]
\begin{center}
\includegraphics[
height=9.9113cm,
width=11.1401cm
]%
{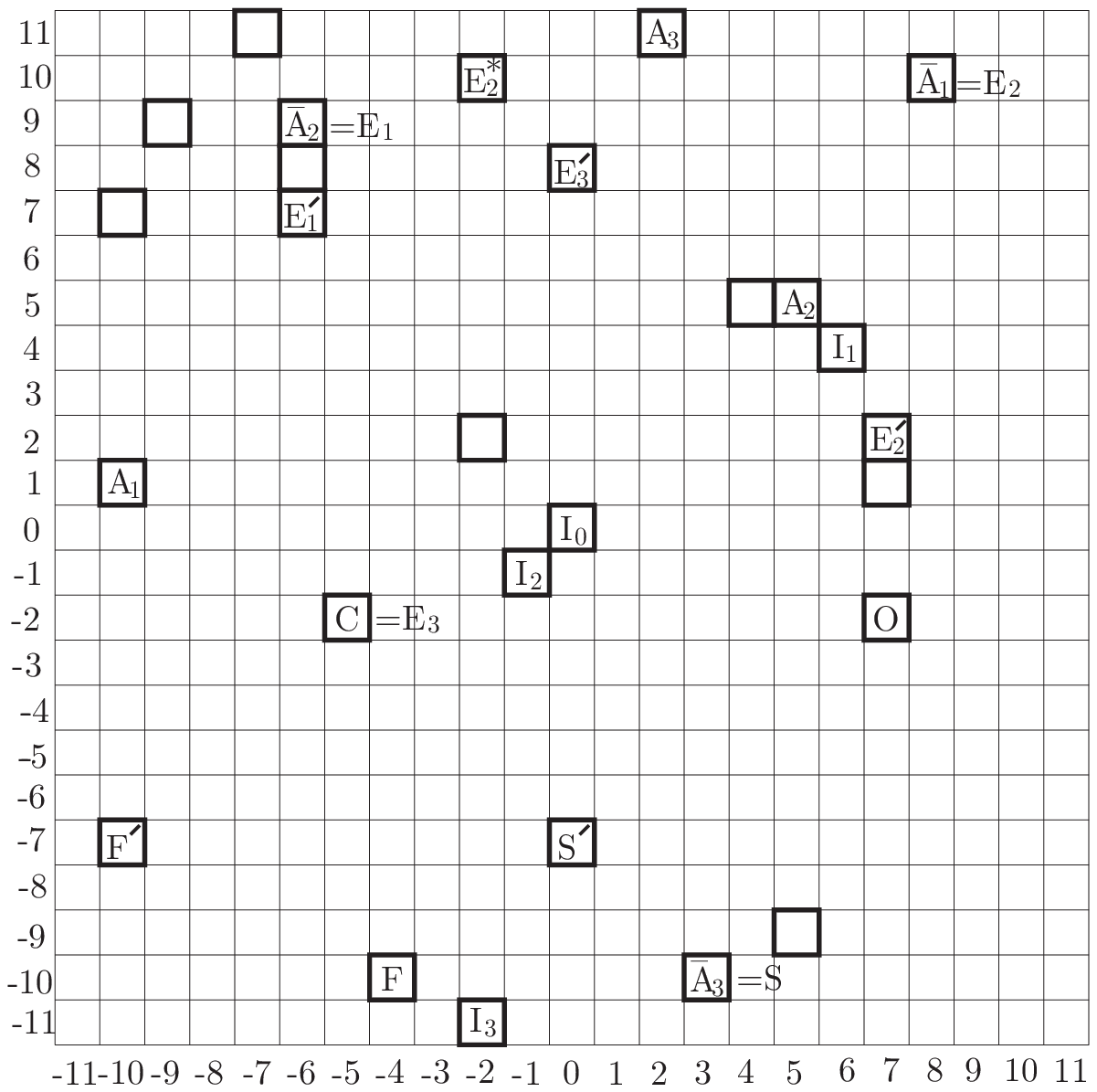}%
\caption{The Neuberg cubic for $A_{1}=\left[  13,1\right]  $, $A_{2}=\left[
5,5,\right]  $ and $A_{3}=\left[  2,11\right]  $}%
\label{NeubergCubic}%
\end{center}
\end{figure}

To define the group structure on the cubic, define $X\star Y$ to be the
(third) intersection of the line $XY$ with the (projective) cubic, so that for
example
\[
I_{0}\star I_{0}=\infty_{e}.
\]
We choose the base point of the group structure to be the point $I_{0}.$ Then
define
\[
X\cdot Y=\left(  X\star Y\right)  \star I_{0}%
\]
with inverse
\[
X^{-1}=X\star\left(  I_{0}\star I_{0}\right)  =X\star\infty_{e}=X^{\ast}.
\]
So we are writing the group multiplicatively, and note that $I_{0}$ is not a
flex, so that $X,Y$ and $Z$ collinear is equivalent to $X\cdot Y\cdot
Z=\infty_{e}$, not $X\cdot Y\cdot Z=I_{0}.$ Furthermore $X$ is of order two
when $X\star X=\infty_{e}$ and $X$ is not $I_{0},$ which happens when $X$ is
$I_{1},I_{2}$ or $I_{3},$ and the four incenters form a Klein $4$-group.

The triangle $\overline{A_{1}A_{2}A_{3}}$ can be recovered from the Neuberg
cubic by first finding the asymptote (tangent to the point at infinity), then
finding the four points $\overline{I_{0}I_{1}I_{2}I_{3}}$ on the cubic with a
tangent parallel to this asymptote, and then taking the orthic triangle of any
three of them. This can all be done algebraically using the group law, since
$I_{j}\star I_{j}=\infty_{e}$ and $A_{1}=I_{2}\star I_{3}$ etc.

\section*{Points on the Neuberg cubic}

Not only is the Neuberg cubic defined metrically, but it also has many points
on it that are metrical in nature. More than a hundred are known, we will
illustrate some of these for our example. The Neuberg cubic $N_{c}$ of
$\overline{A_{1}A_{2}A_{3}}$ first of all passes through $A_{1},A_{2}$ and
$A_{3}.$ It also passes through the reflections of these points in the sides
of the triangle, in this case
\[%
\begin{tabular}
[c]{lllll}%
$\overline{A_{1}}=\left[  8,10\right]  $ &  & $\overline{A_{2}}=\left[
17,9\right]  $ &  & $\overline{A_{3}}=\left[  3,13\right]  .$%
\end{tabular}
\]

It also passes through the four incenters $I_{0},I_{1},I_{2}$ and $I_{3}$ of
$\overline{A_{1}A_{2}A_{3}}.$ In a general field there is no notion of
`interior point', so these four incenters should be regarded symmetrically.
The Neuberg cubic passes through the orthocenter $O=\left[  7,21\right]  $ and
the circumcenter $C=\left[  18,21\right]  $ of $\overline{A_{1}A_{2}A_{3}},$
and these are isogonal conjugates, that is
\[
O^{\ast}=C.
\]
The line $OC$ is the Euler line $e$ of $\overline{A_{1}A_{2}A_{3}}$ and it has
equation $y=21,$ so that it is horizontal and passes through the infinite
point $\infty_{e}=\left[  1:0:0\right]  .$ Note that
\[
\infty_{e}^{\ast}=\left[  5,14\right]  .
\]

Since $3=7^{2}$ is a square, on any side of $\overline{A_{1}A_{2}A_{3}}$ we
may create two equilateral triangles, for example on the side containing
$A_{1}=\left[  13,1\right]  $ and $A_{3}=\left[  2,11\right]  $ we can choose
a third point%
\[
\left[  13+2,1+11\right]  /2\pm\frac{7}{2}\left[  1-11,2-13\right]
\]
namely%
\[%
\begin{tabular}
[c]{lllll}%
$E_{2}=\left[  8,10\right]  $ &  & $\mathrm{or}$ &  & $E_{2}^{\prime}=\left[
7,2\right]  .$%
\end{tabular}
\]
Thus $\overline{A_{1}A_{3}E_{2}}$ and $\overline{A_{1}A_{3}E_{2}^{\prime}}$
are equilateral triangles with%
\[
Q\left(  A_{1},A_{3}\right)  =Q\left(  A_{1},E_{2}\right)  =Q\left(
A_{3},E_{2}\right)  =Q\left(  A_{1},E_{2}^{\prime}\right)  =Q\left(
A_{3},E_{2}^{\prime}\right)  =14.
\]

As opposed to the case over the decimal numbers, there seems to be no obvious
notion of these triangles being either `exterior' or `interior' to
$\overline{A_{1}A_{2}A_{3}}.$ Using all three sides gives the six points%
\[%
\begin{tabular}
[c]{lllll}%
$E_{1}=\left[  17,9\right]  $ &  & $E_{2}=\left[  8,10\right]  $ &  &
$E_{3}=\left[  18,21\right]  $\\
$E_{1}^{\prime}=\left[  13,7\right]  $ &  & $E_{2}^{\prime}=\left[
7,2\right]  $ &  & $E_{3}^{\prime}=\left[  0,8\right]  $%
\end{tabular}
\]
and their isogonal conjugates
\[%
\begin{tabular}
[c]{lllll}%
$E_{1}^{\ast}=\left[  14,9\right]  $ &  & $E_{2}^{\ast}=\left[  21,10\right]
$ &  & $E_{3}^{\ast}=\left[  7,21\right]  $\\
$E_{1}^{\prime\ast}=\left[  17,7\right]  $ &  & $E_{2}^{\prime\ast}=\left[
21,2\right]  $ &  & $E_{3}^{\prime\ast}=\left[  17,8\right]  .$%
\end{tabular}
\]
All twelve of these points lie on the Neuberg cubic. Yet the centroids of the
six equilateral triangles thus formed are
\[%
\begin{tabular}
[c]{lllll}%
$G_{1}=\left[  8,16\right]  $ &  & $G_{2}=\left[  0,15\right]  $ &  &
$G_{3}=\left[  12,9\right]  $\\
$G_{1}^{\prime}=\left[  22,0\right]  $ &  & $G_{2}^{\prime}=\left[
15,20\right]  $ &  & $G_{3}^{\prime}=\left[  6,20\right]  $%
\end{tabular}
\]
and you may check that
\begin{align*}
Q\left(  G_{1},G_{2}\right)   &  =Q\left(  G_{2},G_{3}\right)  =Q\left(
G_{1},G_{3}\right)  =19\\
Q\left(  G_{1}^{\prime},G_{2}^{\prime}\right)   &  =Q\left(  G_{2}^{\prime
},G_{3}^{\prime}\right)  =Q\left(  G_{1}^{\prime},G_{3}^{\prime}\right)  =12
\end{align*}
so that Napolean's theorem that the centroids of both `external' and
`internal' equilateral triangles themselves form an equilateral triangle seems
to hold. It seems curious that the six points $E_{i}$ and $E_{j}^{\prime}$ are
thereby divided naturally into two groups.

The Fermat points of a triangle may be defined over the decimal numbers as the
perspectors of the `external and internal equilateral triangles', and with the
above interpretation, these points exist also in this field. There is another
approach to their definition. The vertex bisectors at $A_{1}$ of
$\overline{A_{1}A_{2}A_{3}}$ intersect $A_{2}A_{3}$ at the points
$X_{1}=\left[  20,21\right]  $ and $Y_{1}=\left[  12,14\right]  $. The circle
through these points with center the midpoint of $\overline{X_{1}Y_{1}}$ has
equation $\left(  x-16\right)  ^{2}+\left(  y-6\right)  ^{2}=11$ and is called
an \textbf{Apollonius circle} of $\overline{A_{1}A_{2}A_{3}}.$ There is also
such a circle starting with $A_{2},$ with equation $\left(  x-1\right)
^{2}+\left(  y-14\right)  ^{2}=5$ and one starting with $A_{3},$ with equation
$\left(  x-4\right)  ^{2}+\left(  y-17\right)  ^{2}=17.$ These three
Appollonius circles intersect at two points, called the \textbf{isodynamic
points }of $\overline{A_{1}A_{2}A_{3}},$ given by%
\[%
\begin{tabular}
[c]{lllll}%
$S=\left[  3,13\right]  $ &  & \textrm{and} &  & $S^{\prime}=\left[
0,16\right]  .$%
\end{tabular}
\]
The Neuberg cubic passes through the two isodynamic points. The centres of the
three Appollonius circles are collinear, and lie on the \textbf{Lemoine line
}with equation $16x+7y+1=0.$

The isogonal conjugates of the isodynamic points are the \textbf{Fermat
points}%
\[%
\begin{tabular}
[c]{lllll}%
$F=S^{\ast}=\left[  19,13\right]  $ &  & \textrm{and} &  & $F^{\prime}=\left(
S^{\prime}\right)  ^{\ast}=\left[  13,16\right]  .$%
\end{tabular}
\]
It may be checked that $F$ is also the centre of perspectivity between
$\overline{A_{1}A_{2}A_{3}}$ and $\overline{E_{1}E_{2}E_{3}},$ while
$F^{\prime}$ is the centre of perspectivity between $\overline{A_{1}A_{2}%
A_{3}}$ and $\overline{E_{1}^{\prime}E_{2}^{\prime}E_{3}^{\prime}}$. It may be
remarked that in the decimal number plane, the Fermat points also have an
interpretation in terms of minimizing the sum of the distances to the vertices
of the triangle, but this kind of statement cannot be expected to have a
simple analog in universal geometry.

The \textbf{Brocard line} with equation $10x+10y+1=0$ passes through the
circumcenter $C=\left[  18,21\right]  ,$ the \textbf{symmedian point}
$K=G^{\ast}=\left[  10,6\right]  $ and the two isodynamic points $S$ and
$S^{\prime}$. It is perpendicular to the Lemoine line.

\section*{Quadrangles and Desmic structure}

Elliptic curves naturally give rise to interesting configurations of $12$
points and $16$ lines, called by John Conway \textit{Desmic (or linking)}
\textit{structure, }where each line passes through three points and each point
lies on four lines (see \cite{Stothers}). To describe this situation, begin
with a triangle $ABC$ and two generic points $P$ and $Q.$ Then define
\[%
\begin{tabular}
[c]{lllll}%
$A^{\prime}=\left(  BP\right)  \left(  CQ\right)  $ &  & $B^{\prime}=\left(
CP\right)  \left(  AQ\right)  $ &  & $C^{\prime}=\left(  AP\right)  \left(
BQ\right)  $\\
$A^{\prime\prime}=\left(  BQ\right)  \left(  CP\right)  $ &  & $B^{\prime
\prime}=\left(  CQ\right)  \left(  AP\right)  $ &  & $C^{\prime\prime}=\left(
AQ\right)  \left(  BP\right)  $%
\end{tabular}
\]
This insures that $\overline{ABC}$ and $\overline{A^{\prime}B^{\prime
}C^{\prime}}$ are perspective from some perspector $D^{\prime\prime}$, that
$\overline{A^{\prime}B^{\prime}C^{\prime}}$ and $\overline{A^{\prime\prime
}B^{\prime\prime}C^{\prime\prime}}$ are perspective from some perspector $D$
and that $\overline{A^{\prime\prime}B^{\prime\prime}C^{\prime\prime}}$ and
$\overline{ABC}$ are perspective from some perspector $D^{\prime}.$
Furthermore the points $D,D^{\prime}$ and $D^{\prime\prime}$ are collinear.

Put another way, two triangles which are doubly perspective are triply
perspective (essentially a consequence of Pappus' theorem).
\begin{figure}
[h]
\begin{center}
\includegraphics[
height=6.2883cm,
width=12.8351cm
]%
{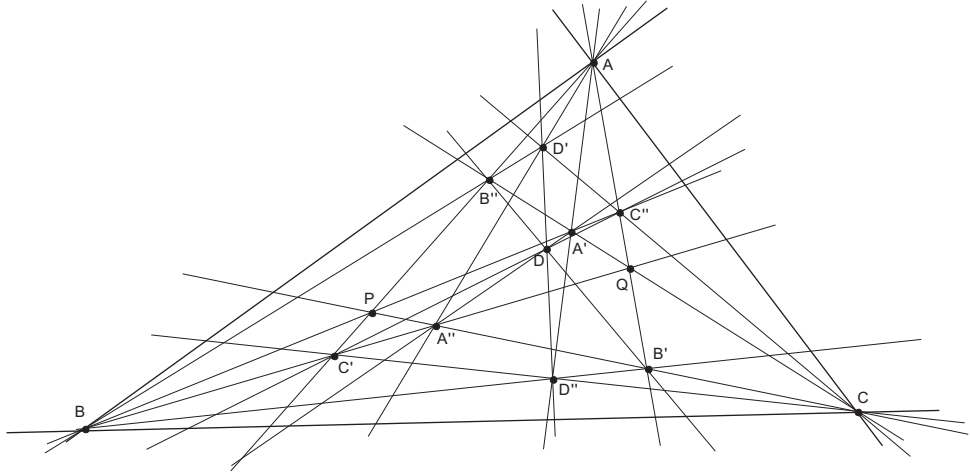}%
\caption{Desmic $16-12$ structure}%
\label{Desmic}%
\end{center}
\end{figure}
The various collinearities can be recorded in terms of the array%
\[%
\begin{tabular}
[c]{|l|l|l|l|}\hline
$A$ & $B$ & $C$ & $D$\\\hline
$A^{\prime}$ & $B^{\prime}$ & $C^{\prime}$ & $D^{\prime}$\\\hline
$A^{\prime\prime}$ & $B^{\prime\prime}$ & $C^{\prime\prime}$ & $D^{\prime
\prime}$\\\hline
\end{tabular}
\]
A triple of points from the array is collinear precisely when a) each is from
a different row, and b) if no $D$ is involved each is from a different column,
and c) if a $D$ is involved, then the other two are both from the same column.
An example of the former would be $A,B^{\prime\prime}$ and $C^{\prime},$ or
$C,B^{\prime}$ and $A^{\prime\prime}$. An example of the latter would be
$D^{\prime}$, $B$ and $B^{\prime\prime}$ or $D,D^{\prime} $ and $D^{\prime
\prime}$. There are then exactly $16$ such collinearities among these $12$ points.

Given a point $P$ on a cubic, there are in general four points $X_{1}%
,X_{2},X_{3}$ and $X_{4}$ on the cubic, other than $P,$ whose tangents pass
through $P.$ Call these four points a \textbf{quadrangle }of the Neuberg
cubic, and more specifically the \textbf{quadrangle to }$P.$ To illustrate
this, we write $X_{1},X_{2},X_{3},X_{4}:P.$

Here are some quadrangles for our cubic:%
\begin{align*}
A_{1},A_{2},A_{3},\infty_{e}  &  :\infty_{e}^{\ast}\\
I_{1},I_{2},I_{3},I_{o}  &  :\infty_{e}\\
\overline{A_{1}},\overline{A_{2}},\overline{A_{3}},C  &  :\left[  0,8\right]
\\
\overline{A_{1}}^{\ast},\overline{A_{2}}^{\ast},\overline{A_{3}}^{\ast},O  &
:\left[  17,8\right]
\end{align*}

Given three collinear points on a cubic, the associated quadrangles form a
Desmic structure. Here are some examples for our cubic%
\[%
\begin{tabular}
[c]{|l|l|l|l|}\hline
$A_{1}$ & $A_{2}$ & $A_{3}$ & $\infty_{e}$\\\hline
$I_{1}$ & $I_{2}$ & $I_{3}$ & $I_{0}$\\\hline
$I_{1}$ & $I_{2}$ & $I_{3}$ & $I_{0}$\\\hline
\end{tabular}
\]%
\[%
\begin{tabular}
[c]{|l|l|l|l|}\hline
$A_{1}$ & $A_{2}$ & $A_{3}$ & $\infty_{e}$\\\hline
$E_{1}$ & $E_{2}$ & $E_{3}$ & $\left[  3,13\right]  $\\\hline
$E_{1}^{\ast}$ & $E_{2}^{\ast}$ & $E_{3}^{\ast}$ & $\left[  19,13\right]
$\\\hline
\end{tabular}
\]%
\[%
\begin{tabular}
[c]{|l|l|l|l|}\hline
$A_{1}$ & $A_{2}$ & $A_{3}$ & $\infty_{e}$\\\hline
$E_{1}^{\prime}$ & $E_{2}^{\prime}$ & $E_{3}^{\prime}$ & $\left[  0,16\right]
$\\\hline
$E_{1}^{\prime\ast}$ & $E_{2}^{\prime\ast}$ & $E_{3}^{\prime\ast}$ & $\left[
13,16\right]  .$\\\hline
\end{tabular}
\]

We see that having recognized an (affine) cubic curve as a Neuberg cubic of a
triangle, we have lots of natural and deep geometry that connects to the group
multiplication. A natural question is: given an elliptic curve can we find an
affine view of it which is a Neuberg cubic? And if so, how can we classify
such views, and use them to understand elliptic curves?

Such an approach ought to be especially useful in the convenient laboratory
provided by finite fields.

\section*{Tangent conics to an affine cubic}

Here is a quite different use of affine coordinates in the study of an
elliptic curve. For more information and examples involving tangent conics,
see \cite{Wild}. Different metrical interpretations of tangent conics thus
allows one to distinguish points on an affine curve from the nature of the
tangent conic. Figure \ref{TangentConics} gives a view of some tangent conics
to $\left[  x_{0},y_{0}\right]  $ for the curve $y^{2}=x^{3}-x$ over the
decimal numbers. Tangent conics to points on the `egg' are ellipses, while
others are either hyperbolas opening horizontally, a pair of lines, or
hyperbolas opening vertically depending respectively on whether $x_{0}$ is
less than, equal to, or greater than $\sqrt{\frac{2}{3}\sqrt{3}+1}.$ Rather
remarkably, these tangent conics nowhere intersect.%
\begin{figure}
[h]
\begin{center}
\includegraphics[
height=5.4678cm,
width=5.4291cm
]%
{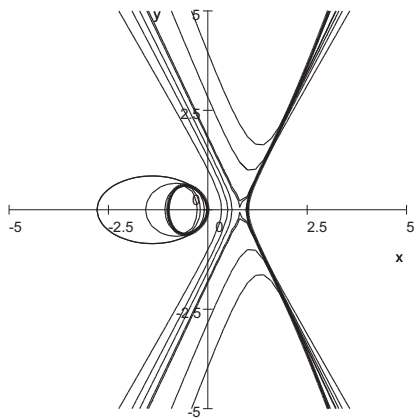}%
\caption{Tangent conics to $y^{2}=x^{3}-x$}%
\label{TangentConics}%
\end{center}
\end{figure}

\begin{theorem}
Over a field in which $-3$ is not a square, any two tangent conics of the
affine curve $y^{2}=ax^{3}+bx+c$ are either identical or disjoint.
\end{theorem}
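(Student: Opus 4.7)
The plan is to derive an explicit equation for the tangent conic at a point $P = [x_0, y_0]$ on the cubic $y^2 = ax^3 + bx + c$, and then show by a short elimination computation that two tangent conics at distinct points can share no points over a field in which $-3$ is not a square. Substituting $x = x_0 + u$ and $y = y_0 + v$ into $F(x,y) = y^2 - ax^3 - bx - c$, and using $y_0^2 = ax_0^3 + bx_0 + c$ to kill the constant term, one obtains
\[
F(x_0 + u,\, y_0 + v) = v^2 + 2y_0 v - 3ax_0 u^2 - (3ax_0^2 + b)u - au^3.
\]
The tangent conic $C_P$ at $P$ is, following \cite{Wild}, the osculating conic obtained by discarding the cubic term $-au^3$; converting back to the original coordinates gives the clean form
\[
C_P:\ y^2 = 3ax_0 x^2 + (b - 3ax_0^2)\,x + ax_0^3 + c.
\]

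Next I would fix two points $P_0 = [x_0, y_0]$ and $P_1 = [x_1, y_1]$ on the cubic. If $x_0 = x_1$, then $y_1 = \pm y_0$, but the equation of $C_P$ depends only on $x_0$, so $C_{P_0}$ and $C_{P_1}$ coincide. Otherwise, the key step is to subtract the equations of $C_{P_0}$ and $C_{P_1}$; this eliminates $y^2$ and leaves a quadratic in $x$ alone,
\[
3a(x_0 - x_1)\,x^2 - 3a(x_0^2 - x_1^2)\,x + a(x_0^3 - x_1^3) = 0.
\]
Dividing by the nonzero scalar $a(x_0 - x_1)$ reduces this to $3x^2 - 3(x_0 + x_1)x + (x_0^2 + x_0 x_1 + x_1^2) = 0$, whose discriminant computes to $9(x_0+x_1)^2 - 12(x_0^2 + x_0 x_1 + x_1^2) = -3(x_0 - x_1)^2$.

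Any common point of $C_{P_0}$ and $C_{P_1}$ must then have an $x$-coordinate solving this quadratic in $\mathbb{F}$, which requires $-3(x_0 - x_1)^2$ to be a square; since $(x_0 - x_1)^2$ is a nonzero square, that would force $-3$ itself to be a square, contradicting the hypothesis. Hence the two conics are disjoint. The only delicate step is fixing the correct notion of tangent conic and checking that its equation really has the clean symmetric form above; once that is in hand the argument collapses to the identity $9(p+q)^2 - 12(p^2 + pq + q^2) = -3(p - q)^2$, and there should be no further obstacle.
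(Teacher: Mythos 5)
Your proposal is correct and follows essentially the same route as the paper: derive the tangent conic at $[x_0,y_0]$ by translating, truncating to the quadratic part, and translating back to get $y^2 = 3ax_0x^2 + (b-3ax_0^2)x + ax_0^3 + c$, then subtract two such equations, cancel $a(x_0-x_1)$, and observe that the remaining quadratic in $x$ has discriminant $-3(x_0-x_1)^2$, a nonsquare. The algebra checks out and matches the paper's computation exactly.
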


\begin{proof}
Recall that the tangent conic to an affine curve is the part of the Taylor
expansion of degree two or less (see \cite[Chapter 19]{Wild}). More
specifically, to find the tangent conic to $y^{2}=ax^{3}+bx+c$ at a point
$A=\left[  x_{0},y_{0}\right]  $ on it, first translate the curve by $-A,$
yielding the equation%
\[
\left(  y+y_{0}\right)  ^{2}=a\left(  x+x_{0}\right)  ^{3}+b\left(
x+x_{0}\right)  +c
\]
or, after simplification using the fact that $A$ lies on the original curve,%
\[
y^{2}+2yy_{0}-ax^{3}-3ax^{2}\allowbreak x_{0}+x\left(  -b-3ax_{0}^{2}\right)
=0.
\]
Then take the quadratic part:
\[
y^{2}+2yy_{0}-3ax^{2}\allowbreak x_{0}+x\left(  -b-3ax_{0}^{2}\right)  =0
\]
and translate this conic back by $A,$ yielding%
\[
\left(  y-y_{0}\right)  ^{2}+2\left(  y-y_{0}\right)  y_{0}-3a\left(
x-x_{0}\right)  ^{2}x_{0}+\left(  x-x_{0}\right)  \left(  -b-3ax_{0}%
^{2}\right)  =0
\]
or after simplification%
\[
y^{2}-3ax^{2}x_{0}+x\left(  3ax_{0}^{2}-b\right)  -ax_{0}^{3}-c=0.
\]
To find the intersection between two such tangent conics
\begin{align*}
y^{2}-3ax^{2}x_{0}+x\left(  3ax_{0}^{2}-b\right)  -ax_{0}^{3}-c  &  =0\\
y^{2}-3ax^{2}x_{1}+x\left(  3ax_{1}^{2}-b\right)  -ax_{1}^{3}-c  &  =0
\end{align*}
take the difference between the two equations, which factors as%
\[
a\left(  x_{1}-x_{0}\right)  \left(  3x^{2}-3x\left(  x_{0}+x_{1}\right)
+x_{0}^{2}+x_{0}x_{1}+x_{1}^{2}\right)  .
\]
If $x_{0}=x_{1}$ then the tangent conics coincide. Otherwise we get an
intersection when the second quadratic factor has a zero. But its discriminant
is
\[
9\left(  x_{0}+x_{1}\right)  ^{2}-4\times3\left(  x_{0}^{2}+x_{0}x_{1}%
+x_{1}^{2}\right)  =\allowbreak\left(  -3\right)  \left(  x_{1}-x_{0}\right)
^{2}%
\]
and so if $-3$ is not a square then there is no solution and so the tangent
conics are disjoint.
\end{proof}

Note that the equation of the tangent conic%
\[
y^{2}-3ax^{2}x_{0}+x\left(  3ax_{0}^{2}-b\right)  -ax_{0}^{3}-c=0.
\]
can be rewritten as
\[
y^{2}-\left(  ax^{3}+bx+c\right)  +a\left(  x-x_{0}\right)  ^{3}=0.
\]

Figure 5 gives a view of some tangent conics for the curve $y^{2}=x^{3}+x.$
The tangent conic to $\left[  0,0\right]  $ is a parabola, and otherwise they
are either hyperbolas opening horizontally, a pair of lines, or hyperbolas
opening vertically depending respectively on whether $x_{0}$ is less than,
equal to, or greater than$\frac{1}{3}\sqrt{3}\sqrt{2\sqrt{3}-3}.$%
\begin{center}
\includegraphics[
height=5.3236cm,
width=5.2654cm
]%
{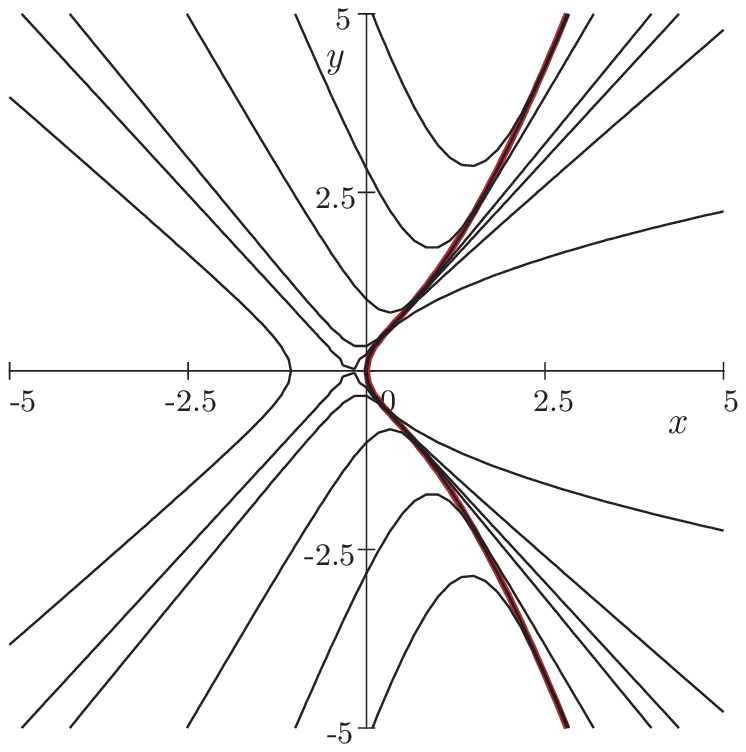}%
\\
Figure 5: Tangent conics to $y^{2}=x^{3}+x$%
\label{TangentConic3}%
\end{center}

Figure 6 shows the nodal cubic $y^{2}=x^{3}+x^{2}$ with tangent conic at
$\left[  x_{0},y_{0}\right]  $ given by%
\[
y^{2}+3xx_{0}^{2}+x^{2}\left(  -3x_{0}-1\right)  -x_{0}^{3}=0.
\]

We get a parabola when $x_{0}=-1/3,$ ellipses for $x_{0}$ less than that,
hyperbolas opening horizontally till $x_{0}=0,$ when we get the pair of lines
$y=\pm x,$ then hyperbolas opening upwards.%
\begin{center}
\includegraphics[
height=5.8038cm,
width=5.722cm
]%
{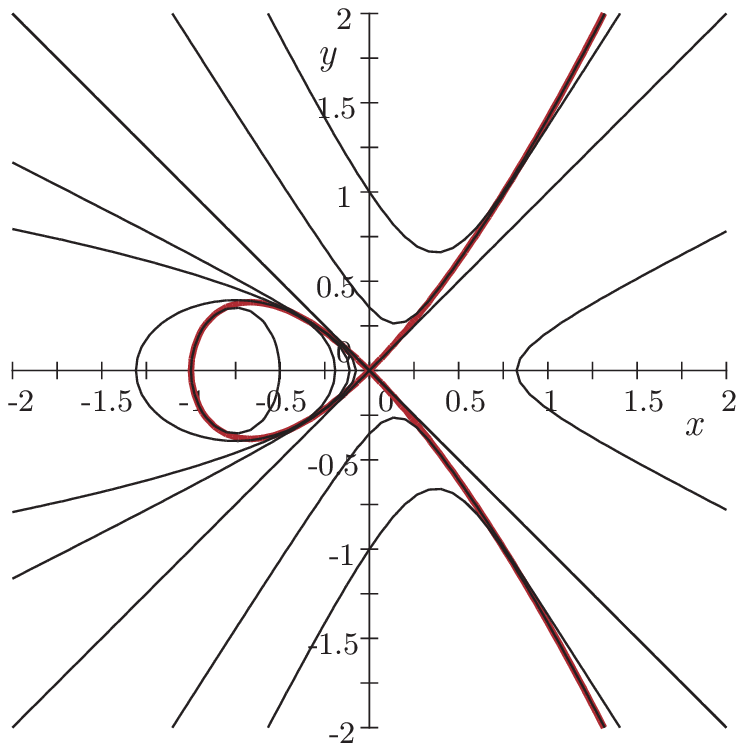}%
\\
Figure 6: Tangent conics to $y^{2}=x^{3}+x^{2}$%
\label{TangentConic4}%
\end{center}

\end{document}